\documentclass[11pt]{amsart}
\usepackage[utf8]{inputenc}

\title{Subgraphs of random graphs in hereditary families}

\usepackage{amsmath}
\usepackage{mathtools}
\usepackage{amsfonts}
\usepackage{amsthm}
\usepackage{amssymb}
\usepackage[sorted]{amsrefs}
\usepackage{graphicx}
\usepackage[english]{babel}
\usepackage{fullpage}
\usepackage{enumerate}
\usepackage{tasks}
\usepackage{mleftright}
\usepackage{hyperref}
\linespread{1.1}

\newtheorem{theorem}{Theorem}[section]
\newtheorem{lemma}[theorem]{Lemma}

\def\N{\mathbb{N}}

\def\I{\mathcal{I}}
\def\P{\mathcal{P}}
\def\X{\mathcal{X}}

\DeclareMathOperator*{\ex}{ex}
\newcommand{\abs}[1]{\left\lvert#1\right\rvert}
\newcommand{\eps}{\varepsilon}
\newcommand{\Ex}[1]{\mathbb{E}\left[#1\right]}
\newcommand{\pr}[1]{\mathbb{P}\left(#1\right)}
\newcommand{\se}{\subseteq}

\begin{document}

\author{Alexander Clifton}

\address{Discrete Mathematics Group, Institute for Basic Science (IBS), Daejeon, South Korea.}
\email{yoa@ibs.re.kr}

\author{Hong Liu}

\address{Extremal Combinatorics and Probability Group (ECOPRO), Institute for Basic Science (IBS), Daejeon, South Korea.}
\email{hongliu@ibs.re.kr}

\author{Letícia Mattos}

\address{Institut für Informatik, Universität Heidelberg, Im Neuenheimer Feld 205, D-69120 Heidelberg, Germany}
\email{mattos@uni-heidelberg.de}

\author{Michael Zheng}
\address{Fachbereich Mathematik, Universität Hamburg, Mittelweg 177, D-20148 Hamburg, Germany}
\email{xiangxiang.zheng@studium.uni-hamburg.de}

\thanks{A.C. and H.L. were supported by the Institute for Basic Science (IBS-R029-C1 and IBS-R029-C4, respectively).}

	\begin{abstract}
		For a graph $G$ and a hereditary property $\P$, let $\ex(G,\P)$ denote the maximum number of edges of a subgraph of $G$ that belongs to $\P$.
		We prove that for every non-trivial hereditary property $\P$ such that $L \notin \P$ for some bipartite graph $L$ and for every fixed $p \in (0,1)$ we have
		\[\ex(G(n,p),\P) \le n^{2-\eps}\]
		with high probability, for some constant $\eps = \eps(\P)>0$.
		This answers a question of Alon, Krivelevich and Samotij.
	\end{abstract}
	
	\maketitle
	
	\section{Introduction}
	
	Let $G(n,p)$ be the Erd\H{o}s--R\'enyi random graph on the vertex set $[n]\coloneqq \{1,2,\ldots,n\}$, where each edge of $K_n$ is included independently with probability $p$.
	A hereditary property $\P$ is a collection of graphs closed under taking induced subgraphs.
	In other words, if $G \in \P$, then the subgraph $G[S]$ induced by $G$ on $S$ belongs to $\P$, for every $S \se V(G)$.
	In order to avoid having a graph  $G \notin \P$ with chromatic number $\chi(G)=1$, we also assume that every hereditary property contains all edgeless graphs.

	For a graph $G$, let $\ex(G,\P)$ denote the maximum number of edges of a subgraph of $G$ that belongs to $\P$.
	Let $k(\P)$ be the  minimum chromatic number of a graph that does not belong to $\P$.
	Recently, Alon, Krivelevich and Samotij \cite{AKS23} showed that for every fixed $p \in (0,1)$, we have
	\[\ex(G(n,p),\P) = \left(1-\dfrac{1}{k(\P)-1}+o(1)\right)p\binom{n}{2}\]
	with high probability.
	The same assertion for properties defined by avoiding a single graph is known in a strong form, and the precise range of the probabilities for which it holds has been determined by Conlon and Gowers~\cite{conlon2016combinatorial} and Schacht~\cite{schacht2016extremal}.

	Observe that if $\P$ misses a bipartite graph, then this estimate only gives $\ex(G(n,p),\P) =o(n^2)$.
	In the same paper \cite{AKS23}, the authors asked for a more accurate estimate. More precisely, they asked whether it is true that $\ex(G(n,p),\P) \le n^{2-\eps}$ for some $\eps = \eps(\P)>0$ when $\P$ misses a bipartite graph.
	
	In this note, we answer the question of Alon, Krivelevich and Samotij \cite{AKS23} affirmatively.
	
	\begin{theorem}\label{thm:main}
		Let $\P$ be a hereditary graph property. Suppose that $L \notin \P$, for some bipartite graph $L$. Then, for every fixed $p \in (0,1)$, with high probability we have
		\[\ex(G(n,p),\P) \le n^{2-\eps},\]
		for some $\eps = \eps(L)>0$.
	\end{theorem}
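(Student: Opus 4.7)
Since $\P$ is hereditary and $L \notin \P$, every $H \in \P$ is induced-$L$-free, so it suffices to show that for some $\eps = \eps(L) > 0$, with high probability every subgraph $H \se G(n,p)$ with $|E(H)| \ge n^{2-\eps}$ contains $L$ as an induced subgraph. Write the bipartition of $L$ as $A \sqcup B$ with $|A| = a$, $|B| = b$, and set $m := a+b$ and $e := e(L)$. The overall strategy is to find such an induced copy directly, via a regularity/counting argument inside a dense pair of a suitable partition of the host graph.

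First I would apply Szemer\'{e}di's regularity lemma to the dense host $G(n,p)$, obtaining a partition $V = V_1 \sqcup \cdots \sqcup V_k$ with bounded $k$ in which most pairs $(V_i, V_j)$ are $\eta$-regular of density approximately $p$. Since $|E(H)| \ge n^{2-\eps}$ and there are $O(k^2)$ pairs, averaging produces a regular pair $(V_i, V_j)$ in which $H$ has density $d \gtrsim n^{-\eps}$. Inside this pair, an appropriate counting/embedding lemma should yield on the order of $d^{e}(1-d)^{ab-e} |V_i|^{a} |V_j|^{b} \gtrsim n^{m - \eps \cdot e}$ labeled induced bipartite copies of $L$, with the vertices of $A$ embedded in $V_i$ and those of $B$ embedded in $V_j$. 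For $\eps < m/e$, this count is polynomial in $n$ and hence positive.

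To upgrade a bipartite induced $L$-copy to a full induced copy of $L$ in $H$, the two parts $A' \se V_i$ and $B' \se V_j$ must also be independent sets in $H$. Since $|E(H)| \le n^{2-\eps}$, the number of $a$-subsets of $V_i$ containing an $H$-edge is at most $|E(H)| \binom{|V_i|}{a-2} = O(n^{a-\eps})$, a vanishing fraction of all $a$-subsets, and similarly for $B'$. Hence only an $O(n^{-\eps})$-fraction of bipartite embeddings fail to extend, and so many induced $L$-copies exist in $H$, contradicting $H \in \P$. Picking $\eps = \eps(L)$ sufficiently small (e.g.\ $\eps := m/(2e)$) completes the argument.

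\textbf{Main obstacle.} The technical heart is making the counting lemma in the regular pair rigorous when the $H$-density is only $d = n^{-\eps}$. Standard (dense) counting lemmas require $d$ to be a positive constant; in this sparse regime one needs an induced counting lemma for bipartite subgraphs sitting inside a sparse subgraph of a pseudorandom host, in the vein of the KLR theorem and its induced variants. Because the host $G(n,p)$ has constant density $p$, one may hope to bypass the full sparse machinery and instead exploit the strong pseudorandom properties of $G(n,p)$ directly, but this is where the real work lies. A second possible route is to apply the hypergraph container method for induced subgraph problems, producing a family of containers within which the edge count of any induced-$L$-free $H \se G(n,p)$ can be controlled via a union bound over the container family.
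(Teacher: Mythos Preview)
Your reduction to induced-$L$-freeness is correct and matches the paper, but the regularity-plus-counting plan has a genuine gap that you flag yourself and do not close. Applying regularity to $G(n,p)$ gives a partition regular for $G(n,p)$, not for $H$; and in a pair where $H$ has density only $d \asymp n^{-\eps}$, no available counting lemma yields $\Omega(d^{e}|V_i|^{a}|V_j|^{b})$ induced bipartite copies of $L$ in $H$. Dense counting needs $d$ bounded away from $0$; sparse regularity together with a KLR-type transference is calibrated for subgraphs of $G(n,q)$ near the extremal threshold, not for a subgraph of vanishing relative density inside a host of constant density. This is precisely the difficulty the problem poses, and the proposal does not supply the missing ingredient (the container route you mention is a different argument and is not developed). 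There is also an error in the independence step: you write ``since $|E(H)|\le n^{2-\eps}$'', but the hypothesis is $|E(H)|\ge n^{2-\eps}$, and induced-$L$-freeness is not monotone under edge deletion, so you cannot pass to a subgraph with exactly $n^{2-\eps}$ edges. In fact $e_H(V_i)$ can be as large as $e_{G(n,p)}(V_i)=\Theta(|V_i|^{2})$, so the number of $a$-subsets of $V_i$ meeting an $H$-edge is $\Theta(|V_i|^{a})$ and the ``vanishing fraction'' claim fails.

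The paper proceeds quite differently. It first handles $L=K_{s,t}$ by a direct K\H{o}v\'{a}ri--S\'{o}s--Tur\'{a}n double count, using that with high probability every set $A$ of size $\gg\log n$ in $G(n,p)$ contains $\Omega(|A|^{s})$ independent $s$-sets. It then inducts on $e(K_{s,t})-e(L)$ via a sampling lemma: if $G$ is induced-$L^{e-}$-free with $e(G)\ge n^{2-\delta}$, then either a positive fraction of $m$-subsets $X$ (with $m\approx n^{1/(3v(L))}$) satisfy $e(G[X])\ge m^{2-\eps}$ and $G[X]$ is induced-$L$-free, or $G\supseteq K_{m,m}$. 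The first alternative feeds the inductive hypothesis at scale $m$ via Markov's inequality; the second is ruled out in $G(n,p)$ with high probability. No sparse counting lemma is needed.
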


	Our proof combines a K\H{o}v\'{a}ri--Sós--Turán~\cite{kHovari1954problem} type argument with an adaptation of a recent result of Bourneuf, Bucić, Cook and Davies~\cite{BBCD23}.

	First, let us slightly rephrase our problem.
	Let $\P$ be a hereditary graph property and suppose that $L \notin \P$, for some bipartite graph $L$.
	Let $G' \in \P$ be a subgraph of $G(n,p)$ containing the maximum number of edges.
	As $G' \in \P$, for every $S \se V(G')$ the induced graph $G'[S]$ cannot be isomorphic to $L$.
	In particular, this means that $G'$ has no induced copies of $L$.
	Let $\ex_I(G(n,p),L)$ denote the maximum number of edges of a subgraph of $G(n,p)$ which has no induced copies of $L$. Then, we have
	\begin{align}\label{eq:main}
		\ex(G(n,p),\P) \le {\ex}_I(G(n,p),L).
	\end{align}
	It follows from~\eqref{eq:main} that, in order to prove Theorem~\ref{thm:main}, it suffices to show that $\ex_I(G(n,p),L) \le n^{2-\eps}$ with high probability, for some $\eps = \eps(L)>0$.

	\section{The complete bipartite graph case}

	In this section we prove a special case of Theorem~\ref{thm:main} when $L$ is a complete bipartite graph.

	\begin{theorem}\label{thm:complete-bipartite}
		For every $s, t \in \mathbb{N}$ and every fixed $p \in (0,1)$, with high probability we have
		\[ {\ex}_I(G(n,p),K_{s,t}) = O(n^{2-1/s}(\log n)^{1/s}). \]
	\end{theorem}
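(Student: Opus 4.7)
The plan is to combine a Kővári–Sós–Turán-type double-counting argument with two random-graph properties of $G(n,p)$. With high probability, the following hold simultaneously: \textbf{(A)} every subset $U \subseteq V(G(n,p))$ with $|U| \ge K_1 \log n$ is such that $G(n,p)[U]$ contains an independent $t$-subset; and \textbf{(B)} every subset $U \subseteq V(G(n,p))$ with $|U| \ge K_2 \log n$ is such that $G(n,p)[U]$ contains at least $c|U|^s$ independent $s$-subsets, where $K_1, K_2, c > 0$ are constants depending on $s, t, p$. Both can be established via concentration: for (A), Janson's inequality gives $\pr{\text{no independent }t\text{-subset in }G(k, p)} \le e^{-\Omega(k^2)}$ (since $\mu^2/\Delta = \Omega(k^2)$ for the usual mean/covariance computation); for (B), the Azuma–Hoeffding inequality applied to the random variable ``number of independent $s$-subsets of $G(k, p)$'' --- which is Lipschitz in each edge indicator with constant $O(k^{s-2})$ --- gives two-sided concentration of the form $e^{-\Omega(k^2)}$ around the mean $\Theta(k^s)$. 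Both tails survive the union bound over $\binom{n}{k}$ subsets when $k \ge K \log n$.

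Now let $H \se G(n,p)$ be induced $K_{s,t}$-free and set $m = e(H)$. I claim that every $H$-independent $s$-subset $S \se V$ satisfies $|N_H(S)| \le K_1 \log n$. Indeed, otherwise (A), applied to $U = N_H(S)$, yields an independent $t$-subset $T$ of $G(n,p)[N_H(S)]$, which is also independent in $H$ since $H \se G(n,p)$; then $S$ and $T$ span an induced $K_{s,t}$ in $H$, a contradiction.

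Finally, I double-count pairs $(v, S)$ with $v \in V$ and $S \se N_H(v)$ an $H$-independent $s$-subset. Summing over $v$, property (B) applied to $U = N_H(v)$ (valid when $d_H(v) \ge K_2 \log n$) produces at least $c\, d_H(v)^s$ choices of $S$. By convexity, discarding the negligible contribution of low-degree vertices (which costs at most $n (K_2 \log n)^s$, absorbed whenever $m \gg n \log n$), the total count is $\gtrsim m^s/n^{s-1}$. On the other hand, summing over $S$ first and using the previous claim gives at most $\binom{n}{s} \cdot K_1 \log n = O(n^s \log n)$. Combining,
\[ \frac{m^s}{n^{s-1}} \;\lesssim\; n^s \log n, \]
which solves to $m = O(n^{2-1/s}(\log n)^{1/s})$, as desired. (The case $m \le C n \log n$ is already stronger than the target bound, so it may be handled separately.)

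The main obstacle is establishing property (B): verifying that with high probability every moderately-sized subset of $G(n,p)$ contains its ``expected'' number of independent $s$-subsets. This requires a concentration inequality with quadratic tails that is strong enough to survive the union bound over all subsets of size $\ge K_2 \log n$; tracking the martingale differences carefully, together with a dyadic sum over $k$, is where the argument is most delicate.
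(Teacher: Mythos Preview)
Your proposal is correct and matches the paper's approach essentially line for line: the paper proves your property~(B) as its Lemma~2.2 via exactly the Azuma argument you describe, obtains your property~(A) by applying the same lemma with $t$ in place of $s$, and then runs the identical K\H{o}v\'{a}ri--S\'{o}s--Tur\'{a}n double count $\sum_{S\in\I}|N_H(S)|=\sum_v|\I_v|$ together with convexity. The only cosmetic differences are that you invoke Janson for~(A) rather than reusing Azuma, and you phrase the conclusion as a uniform upper bound on $|N_H(S)|$ rather than an averaging step; neither changes the argument.
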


	In order to prove Theorem~\ref{thm:complete-bipartite} we need to start with the following lemma.

\begin{lemma}
    \label{lem:independent}
    Let $s \in \N$ and $p \in (0,1)$ be fixed constants. Then, there exists $C > 0$ such that the following holds with high probability. 
	The number of independent sets of size $s$ in $G(n,p)[A]$ is at least $\Omega(\abs{A}^s)$ for all subsets $A \subseteq [n]$ of size at least $C \log n$.
\end{lemma}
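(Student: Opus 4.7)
The plan is to apply Janson's inequality (lower tail) to each fixed $A$ to obtain exponential concentration of the number of independent $s$-sets in $G(n,p)[A]$, and then take a union bound over all $A\se[n]$ with $\abs{A}\ge C\log n$. I will pass to the complement $\bar G \sim G(n,1-p)$, so that ``$S$ is an independent $s$-set in $G(n,p)[A]$'' becomes ``$S$ is a clique in $\bar G[A]$''---a monotone increasing event in the independent edge variables of $\bar G$, which is exactly Janson's setting.

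For a fixed $A$ with $a=\abs{A}\ge C\log n$, writing $q=1-p$, let $X_A$ count the $s$-subsets of $A$ that are independent in $G(n,p)$. The mean is $\mu_A := \Ex{X_A} = \binom{a}{s}q^{\binom{s}{2}}=\Theta(a^s)$. To apply Janson I will need the correction term
\[\Delta := \sum_{\substack{S\ne T\\ \abs{S\cap T}\ge 2}}\pr{S\text{ and }T\text{ both independent}},\]
and a routine computation (partitioning by $k=\abs{S\cap T}\in\{2,\ldots,s-1\}$) will show $\Delta=O(a^{2s-2})$, with the dominant contribution coming from $k=2$ when $s\ge 3$ (and $\Delta=0$ when $s=2$). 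This gives $\mu_A^2/(\mu_A+2\Delta)=\Theta(a^2)$, so Janson yields $\pr{X_A\le \mu_A/2}\le \exp(-c_1 a^2)$ for some constant $c_1=c_1(s,p)>0$.

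A union bound over all such $A$ then bounds the failure probability by
\[\sum_{a\ge C\log n}\binom{n}{a}\exp(-c_1 a^2)\;\le\;\sum_{a\ge C\log n}\exp\bigl(a\log n - c_1 a^2\bigr),\]
which will be $o(1)$ once $C=C(s,p)$ is chosen large enough that $c_1 a\ge 2\log n$ for $a\ge C\log n$ (making each summand at most $n^{-a}$). The only substantive ingredient beyond invoking Janson and union-bounding is the second-moment bookkeeping that gives $\Delta=O(a^{2s-2})$; the role of the threshold $\abs{A}\ge C\log n$ is precisely to ensure that the $\exp(-\Omega(a^2))$ concentration defeats the $\binom{n}{a}$ factor in the union bound.
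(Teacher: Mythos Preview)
Your argument is correct and follows the same overall architecture as the paper: compute $\Ex{I_A}=\Theta(\abs{A}^s)$, establish a concentration bound of the form $\pr{I_A\le \tfrac12\Ex{I_A}}\le \exp(-\Omega(\abs{A}^2))$ for each fixed $A$, and then union-bound over all $A$ with $\abs{A}\ge C\log n$. The only difference is the concentration tool. The paper applies Azuma's inequality directly to $I_A$, viewed as a function of the $\binom{\abs{A}}{2}$ independent edge indicators of $G(n,p)[A]$; since flipping one edge changes $I_A$ by at most $\binom{\abs{A}-2}{s-2}$, Azuma immediately gives $\exp(-\Omega(\abs{A}^2))$ with no passage to the complement and no second-moment bookkeeping. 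Your route via the complement and Janson's lower-tail inequality is equally valid and yields the same exponent after the $\Delta=O(a^{2s-2})$ computation; it is perhaps the more ``structural'' choice for subgraph counts, while the paper's Azuma argument is a touch more elementary here because it sidesteps both the complement trick and the case analysis on $\abs{S\cap T}$.
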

\begin{proof}
    Let $A \subseteq [n]$ be a set of size $\abs{A} \ge C \log n$.
	For simplicity, denote by $\I_A$ the collection of independent sets of size $s$ in $G(n,p)[A]$. We have that the expectation of $I_A \coloneqq \abs{\I_A}$ is 
    \begin{align*}
		\Ex{I_A} = \sum_{S \in \binom{A}{s}} \Ex{1_{S \in \I_A}} = \binom{\abs{A}}{s} (1- p)^{\binom{s}{2}} \ge \left( \frac{\abs{A}}{s} \right)^s (1 - p)^{\binom{s}{2}} = \Omega(\abs{A}^s).
	\end{align*}
    Note as well that $I_A$ can be interpreted as a function of $\binom{\abs{A}}{2}$ independent random variables, namely the indicators of the edges in $G(n,p)[A]$. Observe that each indicator variable can influence the value of $I_A$ by at most $\binom{\abs{A} - 2}{s- 2}$. Indeed, each pair of vertices is contained in exactly $\binom{\abs{A} - 2}{s - 2}$ $s$-element subsets of $A$, each of which could potentially be in $\I_A$. Hence, Azuma's inequality \cite{JLR00}*{Theorem 2.25} implies that 
    \begin{align*}
        \pr{I_A \le \frac{1}{2} \Ex{I_A} } &\le \exp\left({- \frac{\frac{\Ex{I_A}^2}{4}}{2 \binom{\abs{A}}{2} \binom{\abs{A} - 2}{s- 2}^2}}\right)  = \exp(- \Omega(\abs{A}^2)).
    \end{align*}
   	Therefore, there exists $C>0$ sufficiently large such that
    \begin{align*}
        \pr{I_A \le \frac{1}{2} \Ex{I_A} } &\le \exp\left(- \frac{\abs{A}^2}{C}\right).
    \end{align*}
    By the union bound, it follows that the probability that there exists a set $A$ of size at least $2C \log n$ such that $I_A \le \frac{1}{2} \Ex{I_A}$ is at most
    \begin{align*}
        \sum_{2C \log n \le t \le n} \binom{n}{t} \exp\left(- \frac{t^2}{C }\right) \le \sum_{2C \log n \le t \le n} \big(ne^{-\frac{t}{C}}\big)^t \le n^{-C\log n}.
    \end{align*}
	This completes the proof.
\end{proof}

\begin{proof}[Proof of Theorem~\ref{thm:complete-bipartite}]

We shall use a K\H{o}vári--Sós--Turán~\cite{kHovari1954problem} type argument.
Let $H$ be a maximum subgraph of $G(n,p)$ (with respect to the number of edges) with no induced copies of $K_{s,t}$.
Let $\I$ be the collection of all independent sets of size $s$ in $H$.
For a set $S$, denote by $N_{H}(S)$ the common neighborhood of $S$ in $H$, 
that is, $N_{H}(S) = \bigcap_{v \in S} N_{H}(v)$.
For a vertex $v$, denote by $\I_{v}$ the collection of all independent sets $I \in \I$ such that $I \se N_H(v)$.
The first step is to note that 
\[\sum \limits_{S \in \I} |N_H(S)| = \sum \limits_{S \in \I} \sum_{v \in [n]} 1_{\{v \in N_H(S)\}} = \sum_{v \in [n]}  \sum \limits_{S \in \I} 1_{\{S \se N_H(v)\}} = \sum_{v \in [n]} |\I_v|. \]

How many independent sets of size $s$ do we expect inside the neighbourhood of a vertex?
By Lemma \ref{lem:independent}, there exists a constant $C>0$ such that with high probability, every set $A$ of size at least $C\log n$ induces $\Omega(|A|^s)$ independent sets of size $s$ 
and $\Omega(|A|^t)$ independent sets of size $t$ in $G(n,p)$. As $H$ is a subgraph of $G(n,p)$, the same holds for $H$.
Thus, it follows that
\[\sum \limits_{S \in \I} |N_H(S)| \ge \sum_{v: \, d_H(v)\ge C\log n} |\I_v| \ge  \sum_{v: \, d_H(v)\ge C\log n} c \cdot d_H(v)^s,\]
for some constant $c>0$.
By convexity, it follows that 
\[\sum \limits_{S \in \I} |N_H(S)| \ge c n^{-s+1} \cdot \left( \sum \limits_{v: \, d_H(v)\ge C\log n}d_H(v) \right)^s \ge c \cdot n^{-s+1} \big(2e(H)-Cn\log n\big)^s.\]
 By averaging over $\I$, we obtain that there must exist a set $S \in \I$ such that
\begin{align*}
	|N_H(S)| \ge c \cdot n^{-2s+1} \big(2e(H)-Cn\log n\big)^s. 
\end{align*}

From the last inequality it follows that if $e(H) \ge C'n^{2-1/s}(\log n)^{1/s}$, for some large enough constant $C'>0$, then $|N_H(S)| \ge C\log n$.
This cannot happen, as this would imply the existence of an independent set $T \se N_H(S)$ of size $t$, and hence $H[S\cup T]$ would be an induced copy of $K_{s,t}$ in $H$.
We conclude that with high probability we have $e(H) = O(n^{2-1/s}(\log n)^{1/s})$.
\end{proof}

\section{Proof of Theorem~\ref{thm:main}}

The following lemma is the key step in our proof.
For a graph $L$ and an edge $e \in L$, let $L^{e-}$ be the graph obtained from $L$ by deleting the edge $e$, but keeping the vertices.

\begin{lemma}\label{lemma:key}
	Let $n,k \in \N$ and $\delta,\eps \in \big(0,\frac{1}{2}\big)$ be such that $4k\delta < \eps$ and let $m \in (n^{2\delta/\eps},n^{1/2k}) \cap \N$.
	Let $L$ be a bipartite graph on $k$ vertices, $e \in L$ and let $G$ be a graph on $n$ vertices with no induced copies of $L^{e-}$. 
	If $e(G)\ge n^{2-\delta}$, then at least one of the following holds for $n$ sufficiently large:
	\begin{enumerate}
		\item [$1.$] There exist at least $\frac{3}{8}\binom{n}{m}$ sets $X \in \binom{V(G)}{m}$ such that $e(G[X]) \ge m^{2-\eps}$ and $G[X]$ has no induced copies of $L$;
		\item [$2.$] $G$ has a copy of $K_{m,m}$.
	\end{enumerate}
\end{lemma}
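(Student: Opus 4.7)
The plan is to show that whenever $G$ contains no $K_{m,m}$, conclusion $(1)$ holds. Sample $X$ uniformly from $\binom{V(G)}{m}$ and bound the probability that $X$ satisfies both requirements of $(1)$ from below by $3/8$.

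For the edge count, $\Ex{e(G[X])}=\binom{m}{2}\binom{n}{2}^{-1}e(G)=\Omega(m^2n^{-\delta})$, and the hypothesis $m>n^{2\delta/\eps}$ yields $m^{2-\eps}=O(n^{-\delta}\Ex{e(G[X])})$, so the threshold is smaller than the mean by a polynomial factor $n^{\Omega(\delta)}$. A second-moment calculation---estimating pairwise covariances of edge indicators (disjoint pairs contribute non-positively, and pairs sharing a vertex contribute $O((\sum_v d_G(v)^2)(m/n)^3)$, bounded using $\sum_v d_G(v)^2\le n^3$)---gives $\mathrm{Var}(e(G[X]))=o(\Ex{e(G[X])}^2)$. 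Chebyshev's inequality then yields $\pr{e(G[X])\ge m^{2-\eps}}\ge 7/8$ for $n$ sufficiently large.

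The heart of the argument is a structural bound on the number of induced copies of $L$ in $G$. Write $e=uv$ and, for each injection $T\colon V(L)\setminus\{u,v\}\to V(G)$, let
\[W_u(T):=\bigl\{w\in V(G)\setminus T(V(L)\setminus\{u,v\}):\forall u'\in V(L)\setminus\{u,v\},\;w\sim_G T(u')\Leftrightarrow u\sim_L u'\bigr\},\]
and define $W_v(T)$ symmetrically. For any distinct $w\in W_u(T)$ and $w'\in W_v(T)$, the set $T(V(L)\setminus\{u,v\})\cup\{w,w'\}$ induces in $G$ either $L$ (if $ww'\in E(G)$) or $L^{e-}$ (otherwise); since $G$ has no induced $L^{e-}$, we must have $ww'\in E(G)$ in every such pair. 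If $\min(|W_u(T)|,|W_v(T)|)\ge 4m$, a case analysis on $|W_u(T)\cap W_v(T)|$ produces a $K_{m,m}$ in $G$---either the intersection induces a clique of size $\ge 2m$, or both $W_u(T)\setminus W_v(T)$ and $W_v(T)\setminus W_u(T)$ have size $\ge 2m$ and are completely joined. This contradicts the assumption, so $\min(|W_u(T)|,|W_v(T)|)<4m$ for every $T$, yielding at most $n^{k-2}\cdot 4m\cdot n=4mn^{k-1}$ labeled induced copies of $L$ in $G$.

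Each such copy lies in $\binom{n-k}{m-k}$ subsets of size $m$; combining $\binom{n-k}{m-k}/\binom{n}{m}\le(m/n)^k$ with $m<n^{1/(2k)}$ shows the fraction of $X\in\binom{V(G)}{m}$ for which $G[X]$ contains an induced $L$ is at most $4m^{k+1}/n\le 4n^{-(k-1)/(2k)}=o(1)$, hence at most $1/2$ for $n$ large. A union bound with the edge-count estimate then produces at least $\bigl(\tfrac{7}{8}+\tfrac{1}{2}-1\bigr)\binom{n}{m}=\tfrac{3}{8}\binom{n}{m}$ subsets satisfying both conditions of $(1)$. The main obstacle will be the structural step---setting up the partial-extension sets $W_u(T),W_v(T)$ (which deliberately ignore the edge $e$) and extracting a $K_{m,m}$ whenever both are large via the no-induced-$L^{e-}$ hypothesis; the concentration and counting arguments are routine once that bound is in hand.
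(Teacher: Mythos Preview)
Your argument is correct and rests on the same structural core as the paper's proof: once a partial embedding of $L-\{u,v\}$ is fixed, the ``no induced $L^{e-}$'' hypothesis forces the candidate sets for $u$ and $v$ to be completely joined, so if both are large one obtains a $K_{m,m}$. The organization is slightly different in two places. First, for the edge-count concentration the paper uses Azuma's inequality on the vertex-exposure martingale (Lipschitz constant $m$ over $m$ steps), whereas you use a second-moment computation and Chebyshev; both are routine and yours is marginally more elementary. Second, the paper runs the structural step as a dichotomy---if at least half of the dense $m$-sets contain an induced $L$, it \emph{pigeonholes} to a single popular partial embedding with $\ge 2mn$ extensions and reads off $K_{m,m}$---while you take the contrapositive, bounding the extension count for \emph{every} partial embedding by $4mn$ (absent a $K_{m,m}$) and summing to get at most $4mn^{k-1}$ labeled induced copies. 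One small clarification you should add: your claim that $T(V(L)\setminus\{u,v\})\cup\{w,w'\}$ induces $L$ or $L^{e-}$ only holds when $T$ is itself an induced embedding of $L-\{u,v\}$; for other injections the claim fails, but those $T$ contribute zero induced copies anyway, so the bound $4mn^{k-1}$ survives.
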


\begin{proof}
	Let $X$ be a random $m$-set chosen from $V(G)$.
	The expected number of edges in $G[X]$ is 
	\[ \Ex{e(G[X])} = \dfrac{m(m-1)}{n(n-1)} \cdot e(G) \ge \dfrac{m^2}{2n^2} \cdot e(G) \ge \dfrac{m^2n^{-\delta}}{2} \ge 2m^{2-\eps},\]
	for all $n$ sufficiently large.
	Now, we apply Azuma's inequality~\cite{JLR00}*{Theorem 2.25} in the vertex exposure martingale. The martingale has $m$ steps and one-step change bounded by $m$.
	Therefore, it follows that
	\begin{align}\label{eq:azuma}
		\mathbb{P} \Big( e(G[X]) \le m^{2-\eps}\Big) \le \exp \left(-\dfrac{(m^{2-\eps})^2}{2m^3}\right)= \exp \left ( -\frac{m^{1-2\eps}}{2}\right ) < \dfrac{1}{4},
	\end{align}
	for all $n$ sufficiently large.
	
	Let $\X_m$ be the family of $m$-sets in $V(G)$ such that $e(G[X]) \ge m^{2-\eps}$.
	By~\eqref{eq:azuma}, we have
	\[ \abs{\X_m} \ge \frac{3}{4}\binom{n}{m}.\]
	Now, we have two cases to analyse. 
	Either half of the $m$-sets $X \in \X_m$ are so that $G[X]$ has an induced copy of $L$, or this does not hold.
	The latter case implies that item 1 holds, so let us assume that we are in the first case. 
	
	If half of the $m$-sets $X \in \X_m$ are such that $G[X]$ has an induced copy of $L$, then in particular the number of induced copies of $L$ in $G$ is at least 
	\[ \frac{3}{8}\binom{n}{m} \binom{n-k}{m-k}^{-1} \ge \frac{3}{8}\left(\dfrac{n}{m}\right)^{k},\]
	where $k$ is the number of vertices of $L$. For simplicity, denote $e = \{u,v\}$ and let $L-\{u,v\}$ denote the graph obtained from $L$ by removing vertices $u,v$. This implies that there exists a set $X''$ of size $k-2$ such that $G[X'']$ is isomorphic to $L-\{u,v\}$ and such that there are at least
	\[ \frac{3}{8}\dfrac{n^2}{m^k} \ge 2mn \]
	ways to extend $X''$ to a set $X'$ of size $k$ such that $G[X']$ is isomorphic to $L$.

	Let $C_u \se V(G)\setminus X''$ be the set of vertices that can play the role of $u$ in one of these extensions of $X''$, and define $C_v$ similarly.
	As the number of edges between $C_u$ and $C_v$ is equal to the number of extensions, it follows that 
	\[ |C_u||C_v| \ge e_G(C_u,C_v) \ge 2mn.\]
	This implies that both $C_u$ and $C_v$ have size at least $2m$.
	Now take two disjoint sets $X_u \se C_u$ and $X_v \se C_v$ of size $m$.
	We cannot have a non-edge in between $X_u$ and $X_v$; otherwise we would have an induced copy of $L^{e-}$ in $G$.
	Therefore, it follows that $G[X_u,X_v]$ is isomorphic to $K_{m,m}$.
\end{proof}

Now we are ready to prove Theorem~\ref{thm:main}.
For a bipartite graph $H$ and $x \in (0,1)$, define
\begin{align}
	q(H,x,n):=\pr{{\ex}_I(G(n,p),H) \ge n^{2-x}}.
\end{align}
Now, fix a bipartite graph $L$ and let $e \in L$.
Let $\eps \in (0,\frac{1}{2})$ and set $\delta = \frac{\eps}{8v(L)}$ and $m = \lfloor n^{\frac{1}{3v(L)}} \rfloor$, where $n$ is sufficiently large.

Set $X_m$ to be the random variable which counts the number of sets $X \in \binom{[n]}{m}$ such that there exists a subgraph 
$G' \se G(n,p)[X]$ with $e(G') \ge m^{2-\eps}$ and no induced copies of $L$.
By Lemma~\ref{lemma:key}, we have
\begin{align}\label{eq:contained}	
\big \{ {\ex}_I(G(n,p),L^{e-}) \ge n^{2-\delta} \big \} \se \left \{X_m \ge \frac{3}{8}\binom{n}{m} \right \}  \cup \{ K_{m,m} \se G(n,p) \}
\end{align}
and by Markov's inequality, we have 
\begin{align}\label{eq:markov}
	\pr{X_m \ge \frac{3}{8}\binom{n}{m}} \le 3 q(L,\eps,m) \quad
	\text{and} \quad
	\pr{K_{m,m} \se G(n,p)} \le p^{m^2}\binom{n}{m}^2 \le  p^{m^2/2}.
\end{align}
For simplicity, set $\ell = v(L)$. By combining \eqref{eq:contained} and \eqref{eq:markov}, and replacing the values of $\delta$ and $m$ we obtain
\begin{align*}
	q \left (L^{e-},\frac{\eps}{8\ell},n \right ) \le 3 q(L,\eps,m) + p^{m^2/2}  \le 3 q(L,\eps,\lfloor{n^{\frac{1}{3\ell}}\rfloor}) + \exp (-\Omega(n^{\frac{2}{3\ell}})).
\end{align*}
Therefore, it follows that if $q(L,\eps,\lfloor{n^{\frac{1}{3\ell}}\rfloor}) = o(1)$, then $q(L^{e-},\frac{\eps}{8\ell},n) = o(1)$.
Suppose $L$ has a bipartition with two partite sets of sizes $s$ and $t$, respectively. By Theorem~\ref{thm:complete-bipartite}, since $q\big(K_{s,t},\frac{1}{2s},n\big) = o(1)$, for all $s,t \in \N$, the conclusion of Theorem~\ref{thm:main} follows by induction on $e(K_{s,t})-e(L)$.

\section*{acknowledgements}
While finishing this note, a similar result has been obtained recently by Fox, Nenadov and Pham~\cite{fox2024largest} independently. In fact, they prove an optimal estimate on $\eps$ when the missing bipartite graphs have bounded degree from one side. 

We thank the Institute for Basic Science in Daejeon, South Korea, where this research was performed, for its support and hospitality during the first ECOPRO Combinatorial Week workshop 2023.
We also thank the organisers and participants of the workshop for providing a pleasant research environment.


\begin{thebibliography}{}
	\bibitem{AKS23} N.~Alon, M.~Krivelevich, and W.~Samotij, Largest subgraph from a hereditary property in a random graph, \emph{Discr. Math}, \textbf{346} (2023), 113480.

	\bibitem{BBCD23} R.~Bourneuf, M.~Bucić, L.~Cook, and J.~Davies, On polynomial degree-boundedness, arXiv:2311.03341 (2023).

	\bibitem{conlon2016combinatorial} D.~Conlon and W.~T.~Gowers, Combinatorial theorems in sparse random sets, \emph{Ann. of Math.}, \textbf{184} (2016), 367--454.

	\bibitem{fox2024largest} J.~Fox, R.~Nenadov, and H. T.~Pham, The largest subgraph without a forbidden induced subgraph, arXiv:2405.05902 (2024).

	\bibitem{fox2011dependent} J.~Fox and B.~Sudakov, The dependent random choice problem, \emph{Random Structures Algorithms}, \textbf{38} (2011), 68--99.

	\bibitem{JLR00} S.~Janson, T.~\L uczak, and A.~Ruciński, Random Graphs, \emph{Wiley}, 2000.

	\bibitem{kHovari1954problem} T.~K\H{o}v\'{a}ri, V.~T.~S\'{o}s, and P.~Tur\'{a}n, On a problem of K. Zarankiewicz, \emph{Colloq. Math.}, \textbf{3} (1954), 50--57.

	\bibitem{schacht2016extremal} M.~Schacht, Extremal results for random discrete structures, \emph{Ann. of Math.}, \textbf{184} (2016), 333--365.



\end{thebibliography}
\end{document}